\providecommand{\keywords}[1]{\emph{Keywords: } #1.}
\newtheorem{theorem}{Theorem}[section]
\newtheorem{proposition}[theorem]{Proposition}
\newtheorem{lemma}[theorem]{Lemma}
\newtheorem{claim}{Claim}
\newtheorem{question}[theorem]{Question}
\newtheorem{corollary}[theorem]{Corollary}
\newtheorem{observation}[theorem]{Observation}
\newtheorem{conjecture}[theorem]{Conjecture}
\theoremstyle{remark}
\newtheorem{case}{Case}
\makeatletter\@addtoreset{case}{theorem}\@addtoreset{case}{lemma}\@addtoreset{case}{corollary}\makeatother
\title{A note on inverting the dijoin of oriented graphs}
\author{Natalie Behague\thanks{Mathematics Institute, University of Warwick, Coventry, UK. Email: \href{mailto:natalie.behague@warwick.ac.uk}{natalie.behague@warwick.ac.uk}. Supported by a PIMS postdoctoral fellowship while at the University of Victoria.}   \and Tom Johnston\thanks{Corresponding author. School of Mathematics, University of Bristol, Bristol, BS8\thinspace1UG, UK and Heilbronn Institute for Mathematical Research, Bristol, BS8\thinspace1UG, UK. Email: \href{mailto:tom.johnston@bristol.ac.uk}{tom.johnston@bristol.ac.uk}.}\and Natasha Morrison\footnotemark[3]\thanks{Department of Mathematics and Statistics, University of Victoria, Canada.} \thanks{Research supported by NSERC Discovery Grant RGPIN-2021-02511 and NSERC Early Career Supplement DGECR-2021-00047. Email: \href{mailto:nmorrison@uvic.ca}{nmorrison@uvic.ca}.} 
 \and Shannon Ogden\footnotemark[3] \thanks{Supported by NSERC CGS M and NSERC Vanier CGS. Email: \href{mailto:sogden@uvic.ca}{sogden@uvic.ca}.}}
\date{\today}
\DeclareMathOperator{\inv}{inv}
\DeclareMathOperator{\mr}{mr}
\DeclareMathOperator{\tmr}{tmr}
\DeclareMathOperator{\rank}{rank}
\newcommand\F{\mathbb{F}}
\newcommand\M{\mathcal{M}}
\begin{document}

\maketitle
\begin{abstract}
    For an oriented graph $D$ and a set $X\subseteq V(D)$, the inversion of $X$ in $D$ is the graph obtained from $D$ by reversing the orientation of each edge that has both endpoints in $X$. Define the inversion number of $D$, denoted $\inv(D)$, to be the minimum number of inversions required to obtain an acyclic oriented graph from $D$. The dijoin, denoted $D_1\rightarrow D_2$, of two oriented graphs $D_1$ and $D_2$ is constructed by taking vertex-disjoint copies of $D_1$ and $D_2$ and adding all edges from $D_1$ to $D_2$. We show that $\inv({D_1 \rightarrow D_2}) > \inv(D_1)$, for any oriented graphs $D_1$ and $D_2$ such that $\inv(D_1) = \inv(D_2) \ge 1$. This resolves a question of Aubian, Havet, H\"orsch, Klingelhoefer, Nisse, Rambaud and Vermande. Our proof proceeds via a natural connection between the graph inversion number and the subgraph complementation number.
\end{abstract}

\keywords{inversion; subgraph complementation; oriented graph; tournament; minimum rank}

\section{Introduction}

Given an oriented graph $D$ and a set $X\subseteq V(D)$, the \emph{inversion} of $X$ in $D$ is the oriented graph obtained from $D$ by reversing the orientation of each edge that has both endpoints in $X$. In this case, we say that we \emph{invert} $X$ in $D$. Given a family of sets $X_1,...,X_k\subseteq V(D)$, the \emph{inversion} of $X_1,...,X_k$ in $D$ is the oriented graph obtained by inverting each set in turn: inverting $X_1$ in $D$, then $X_2$ in the resulting oriented graph, and so on. Note that the order in which we perform these inversions does not impact the final oriented graph. 

If inverting $X_1,...,X_k$ in $D$ produces an acyclic oriented graph, then these sets form a \emph{decycling family} of $D$.
The inversion number was introduced by Belkechine~\cite{belkhechine2009ind} and early results on the topic were obtained by Belkechine, Bouaziz, Boudabbous and Pouzet~\cite{belkhechine2010inversion}. 

Given oriented graphs $D_1$ and $D_2$, the \emph{dijoin} from $D_1$ to $D_2$, denoted by $D_1\rightarrow D_2$ is the oriented graph constructed from vertex-disjoint copies of $D_1$ and $D_2$ by adding all edges $uv$ where $u\in V(D_1)$ and $v\in V(D_2)$.  
Bang-Jensen, Costa Ferreira da Silva and Havet~\cite{bang_jensen2023inversion} observed that if $D_1$ and $D_2$ are oriented graphs then $\inv(D_1\rightarrow D_2) \le \inv(D_1) + \inv(D_2)$, 
and conjectured that equality holds for all $D_1,D_2$. They proved that the conjecture holds if $\inv(D_1) = \inv(D_2) =1$. 

However, this conjecture was shown to be false by two simultaneous papers~\cite{alon2024invertibility, aubian2022problems}. The authors of~\cite{aubian2022problems} provide a whole family of counterexamples, showing that for every odd $k \ge 3$ there is a tournament $D_1$ with $\inv(D_1) = k$ such that for any oriented graph $D_2$ with $\inv(D_2) \ge 1$, we have $\inv(D_1\rightarrow D_2) \le \inv(D_1) + \inv(D_2) - 1$. Thus the trivial upper bound on the inversion number of a dijoin is not always tight. 

For a trivial lower bound, it is easy to see that $\inv(D_1\rightarrow D_2) \ge \max\{\inv(D_1), \inv(D_2)\}$. As a first step towards investigating the tightness of this lower bound,  Aubian, Havet, H\"orsch, Klingelhoefer, Nisse, Rambaud and Vermande asked the following question.
\begin{question}[\protect{\cite[Problem 5.5]{aubian2022problems}}]
    Does there exist a non-acyclic oriented graph $D$ such that 
    \[\inv(D \rightarrow D) = \inv(D)?\]
\end{question}

We answer this question in the negative. In fact, we prove the following slightly more general result.
\begin{restatable}{theorem}{main}
\label{thm:D_dijoin_D_general}
    Let $D_1$ and $D_2$ be oriented graphs such that $\inv(D_1) = \inv(D_2) \ge 1$. Then $\inv({D_1 \rightarrow D_2}) > \inv(D_1)$.
\end{restatable}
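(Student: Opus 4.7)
The plan is to argue by contradiction: suppose $\inv(D_1 \to D_2) \leq k$ where $k = \inv(D_1) = \inv(D_2) \geq 1$, and let $X_1, \ldots, X_k$ be a decycling family of $D_1 \to D_2$. Setting $Y_i := X_i \cap V(D_1)$ and $Z_i := X_i \cap V(D_2)$, the families $(Y_i)$ and $(Z_i)$ decycle $D_1$ and $D_2$ respectively; by the minimality hypothesis both are minimum decycling families, so in particular every $Y_i$ and every $Z_i$ is nonempty.

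Next, I would analyze the resulting acyclic oriented graph $D^*$ through a topological order $L$ on $V(D^*)$. For $u \in V(D_1)$ and $v \in V(D_2)$, tracking the orientation of the dijoin edge $uv$ through the $k$ inversions shows that $u <_L v$ if and only if $B(u,v) := \sum_{i=1}^k \chi_{Y_i}(u)\chi_{Z_i}(v) = 0$ over $\F_2$. Consequently the bipartite $\{0,1\}$-matrix $A$ with $A_{uv} := B(u,v)$ has $\rank_{\F_2}(A) \leq k$, and sorting its rows and columns by $L_1 := L|_{V(D_1)}$ and $L_2 := L|_{V(D_2)}$, its $1$-entries form a down-left-closed (``staircase'') region.

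The connection to subgraph complementation enters here: viewing $A$ as the bipartite adjacency matrix of a bipartite graph $H$ on $V(D_1) \cup V(D_2)$, its bipartite subgraph complementation number is exactly $\rank_{\F_2}(A) \leq k$, and the staircase constraint forces the $\F_2^k$-valued type vectors $T_1(u) := (\chi_{Y_1}(u), \ldots, \chi_{Y_k}(u))$ (and $T_2(v)$ similarly) to organize $V(D_1)$ and $V(D_2)$ into at most $k+1$ type blocks each, arranged along a chain in the $P$-inclusion order induced by $B$. Combining this chain structure with the fact that $L_1$ must be consistent with the oriented graph obtained from $D_1$ by the inversions $(Y_i)$ forces $D_1$ to be internally acyclic on each type block, with monochromatic between-block edges whose direction is determined by the $\F_2$-inner product of the corresponding type vectors.

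The main technical obstacle is the final step: using this rigid block decomposition of $D_1$ (and symmetrically of $D_2$) to extract an explicit decycling family of size at most $k - 1$, thereby contradicting $\inv(D_1) = k$. The plan here is to exploit that the chain of at most $k+1$ type vectors, combined with the rank-at-most-$k$ constraint, forces a genuine linear dependency at the type level, so that the block-level flipping pattern of $D_1$ can be reproduced by only $k - 1$ characteristic vectors (each a union of type blocks), which lifts to a decycling family of $D_1$ of size $k - 1$. The base case $k = 1$ already illustrates the mechanism in miniature: there are then at most two type blocks, which forces $D_1$ to be acyclic outright, and so recovers the result of Bang-Jensen, Costa Ferreira da Silva, and Havet.
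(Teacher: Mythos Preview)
Your outline captures the right bipartite/staircase structure, but the last step---``extract a decycling family of size $k-1$ for $D_1$''---is a genuine gap, and the argument sketched for it does not go through. Two specific problems:

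\textbf{(1) Row-types versus type vectors.} The staircase matrix $A$ only records the inner products $T_1(u)\cdot T_2(v)$; its having at most $k+1$ distinct rows does \emph{not} force the type vectors $T_1(u)\in\F_2^k$ themselves to take at most $k+1$ values. Distinct $T_1(u),T_1(u')$ can give identical rows in $A$ whenever $T_1(u)-T_1(u')$ is orthogonal to every $T_2(v)$. So your ``at most $k+1$ type blocks'' claim is about row-types of $A$, not about the actual membership pattern in $Y_1,\dots,Y_k$, and the subsequent statement that between-block edges in $D_1$ are governed by inner products of block type vectors is unjustified.

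\textbf{(2) Linear dependence does not reduce inversions.} Even granting that the $T_1(u)$ take few values and are linearly dependent in $\F_2^k$, this does \emph{not} imply that the flipping pattern on $D_1$ (determined by the Gram data $T_1(u)\cdot T_1(u')$) can be realised by $k-1$ sets. That implication is exactly the delicate content of Lemma~\ref{lem:minrank}: one has $c_2(G)=\mr(G)$ \emph{unless} every minimum-rank matrix in $\M(G)$ has zero diagonal, in which case $c_2(G)=\mr(G)+1$. Your sketch never engages with this diagonal obstruction, and without it the reduction from $k$ to $k-1$ inversions can fail.

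The paper's proof is structurally different and does not attempt to drop an inversion from $D_1$. It first reduces to tournaments, then takes a \emph{minimal} counterexample $(D_1,D_2)$ and studies the full symmetric block matrix $M=\begin{bmatrix}A & C\\ C^T & B\end{bmatrix}\in\M^*(D_1\to D_2)$ rather than just the bipartite block $C$. A short rank lemma for staircase $C$ shows that if $\rank(A)=\rank(M)$ then $D_2$ has twin vertices or a sink, contradicting minimality; hence $\rank(A)=\rank(B)=k-1$ and $\rank(M)=k$. Now the zero-diagonal criterion of Corollary~\ref{cor:tournament_minrank} forces $A,B$ (hence $M$) to have zero diagonal, and the same criterion applied to $D_1\to D_2$ yields $\inv(D_1\to D_2)=k+1$, the desired contradiction. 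The key idea you are missing is precisely this diagonal characterisation of when $\inv=\tmr+1$.
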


In order to prove this theorem, we use a natural connection between the subgraph complementation number, as studied by Buchanan, Purcell and Rombach in~\cite{buchanan2022subgraph}, and the inversion number,
which we believe may be useful in future research on this topic.
This allows us to deduce that the inversion number of a digraph $D$ is either $\tmr(D)$ or $\tmr(D) + 1$, where $\tmr(D)$ is the minimum rank across a family of matrices (see Section~\ref{sec:subgraph_complement}).
The same connection is made in \cite{pouzet2023b}, however (by using the results in \cite{buchanan2022subgraph}) we are able to classify when $\inv(D) = \tmr(D) + 1$, which is a vital ingredient in our proof.
We discuss this further in Section~\ref{sec:subgraph_complement}, after first noting some easy observations in Section~\ref{sec:observations}. We prove Theorem~\ref{thm:D_dijoin_D_general} in Section~\ref{sec:proof}. Some open problems and conjectures are given in Section~\ref{sec:probs}.

\section{Preliminaries}
\label{sec:observations}

In this section, we recall some definitions and notation pertaining to oriented graphs, and present some basic results on the inversion number of oriented graphs which will be useful in later sections. 

Recall that an \emph{oriented} graph is a pair $D = (V,E)$, where $V$ is a collection of vertices and $E \subseteq V^{(2)}$ is a collection of ordered pairs of distinct vertices such that, for any $u,v \in V$ at most one of $uv$ and $vu$ is in $E$. For $u,v \in V$, we write $uv$ to denote the \emph{edge oriented (or directed) from u to v}.
An oriented graph can be viewed as the result of assigning a direction to, or orienting, each edge of a suitable simple graph.
A \emph{tournament} is an oriented graph where exactly one of $uv$ and $vu$ is present for all $u \not= v \in V$, i.e. an orientation of a complete graph.
An oriented graph $D_1$ is a \emph{subgraph} of $D_2$, denoted $D_1 \subseteq D_2$, if $V(D_1) \subseteq V(D_2)$ and $E(D_1) \subseteq E(D_2)$.

The \emph{out-neighbourhood} of a vertex $v$, denoted $N^{+}(v)$, is the set of all vertices $u\in V$ such that $vu\in E$. The \emph{in-neighbourhood} of $v$, denoted $N^{-}(v)$, is the set of all vertices $u\in V$ such that $uv\in E$. 
A vertex $v$ is a \emph{source} if  $N^{-}(v)$ is empty, and $v$ is a \emph{sink} if $N^{+}(v)$ is empty.

We first make the simple observation that removing vertices and edges from an oriented graph cannot increase the inversion number. Indeed, after removing the same vertices and edges from a decycling family of the initial oriented graph, it is a decycling family of the subgraph. 
\begin{observation}
\label{obs:inv subgraph}
    Let $D_1$ and $D_2$ be tournaments and suppose that $D_1\subseteq D_2$. Then $\inv(D_1)\leq \inv(D_2)$. 
\end{observation}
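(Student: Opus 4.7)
The plan is to prove the observation by taking an optimal decycling family of $D_2$ and restricting it to $V(D_1)$. Let $k = \inv(D_2)$, and let $X_1, \ldots, X_k \subseteq V(D_2)$ be a decycling family, so that inverting these sets in $D_2$ yields an acyclic oriented graph $D_2^\ast$. I would define $X_i' = X_i \cap V(D_1)$ for each $i$ and claim that $X_1', \ldots, X_k'$ is a decycling family of $D_1$.

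The main observation to verify is that for any edge $uv \in E(D_1)$, the parity of the number of times $uv$ is reversed under the inversion of $X_1', \ldots, X_k'$ in $D_1$ agrees with the parity under the inversion of $X_1, \ldots, X_k$ in $D_2$. This is immediate because $u, v \in V(D_1)$, so $u, v \in X_i$ if and only if $u, v \in X_i'$. Hence the orientation of every edge of $D_1$ in the resulting oriented graph $D_1^\ast$ coincides with its orientation in $D_2^\ast$, so $D_1^\ast$ is a sub-digraph of $D_2^\ast$ (on the vertex set $V(D_1)$).

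Finally, since any sub-digraph of an acyclic oriented graph is acyclic, $D_1^\ast$ is acyclic, so $X_1', \ldots, X_k'$ is indeed a decycling family of $D_1$. This yields $\inv(D_1) \leq k = \inv(D_2)$, as desired.

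There is essentially no obstacle here: the only subtlety is to note that inverting the restricted sets in $D_1$ commutes with taking the sub-digraph, which follows because whether an edge is reversed depends only on whether both its endpoints lie in the inverted set — a condition preserved under intersection with $V(D_1)$.
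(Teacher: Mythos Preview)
Your proof is correct and is exactly the standard argument one would expect; the paper itself states this result as an observation without proof, so there is nothing to compare against beyond noting that your restriction-of-a-decycling-family argument is the intended (and essentially only) approach.
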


The following result can help to reduce a problem about oriented graphs to a problem about tournaments only.

\begin{proposition}\label{prop:digraph_to_tournament}
    For every oriented graph $D$, there is a tournament $D^*$ on the same vertex set with $D \subseteq D^*$ and $\inv(D^*) = \inv(D)$. 
\end{proposition}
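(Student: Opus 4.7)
The plan is to argue the two inequalities separately. Since $D \subseteq D^*$ will hold by construction, Observation~\ref{obs:inv subgraph} immediately gives $\inv(D) \le \inv(D^*)$, so the substantive content is the reverse direction $\inv(D^*) \le \inv(D)$.

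Set $k = \inv(D)$ and fix a decycling family $X_1, \ldots, X_k$ of $D$. Let $A$ denote the acyclic oriented graph obtained by inverting these sets in $D$, and fix a topological ordering $v_1, \ldots, v_n$ of $A$. The point I would leverage is that the sequence of inversions preserves the underlying undirected graph: $A$ has exactly the same vertex set and the same non-edges as $D$. So to build $D^*$, I would only need to choose orientations for the non-edges of $D$ in such a way that the \emph{same} family $X_1, \ldots, X_k$ still produces an acyclic oriented graph when applied to $D^*$.

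The key observation is the following parity principle: for any pair of vertices $u,v$, applying the inversions $X_1, \ldots, X_k$ reverses the edge between $u$ and $v$ exactly $|\{t : u,v \in X_t\}|$ times, so the post-inversion orientation of that pair depends only on the parity of this count. For each non-edge $\{v_i, v_j\}$ of $D$ with $i<j$, I would therefore orient the pair in the unique way that, after applying $X_1, \ldots, X_k$, yields the forward edge $v_i v_j$ consistent with the topological order. Concretely, if this count is even I add $v_i v_j$ to $D$, and otherwise I add $v_j v_i$.

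The resulting $D^*$ is a tournament containing $D$, and by construction the inversions $X_1, \ldots, X_k$ applied to $D^*$ produce the transitive tournament on $v_1 < \cdots < v_n$: the edges originating from $D$ become the edges of $A$ (all forward), and the new edges are forward by design. Hence $X_1, \ldots, X_k$ is a decycling family of $D^*$ and $\inv(D^*) \le k$, completing the argument. There is essentially no obstacle beyond checking the parity principle, which follows directly from the order-independence of inversions noted in the introduction.
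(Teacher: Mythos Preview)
Your proof is correct and follows essentially the same approach as the paper: fix a decycling family, extend the resulting acyclic graph to a transitive tournament compatible with a topological order, and observe that the same inversions decycle the corresponding tournament extension of $D$. The only difference is presentational---the paper phrases the construction as ``extend $T$ to a transitive tournament $T^*$ and invert back,'' whereas you unpack this into the explicit parity rule for orienting each non-edge.
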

\begin{proof}
    Let $k \coloneqq \inv(D)$.
    Let $U$ be the acyclic oriented graph that is reached from $D$ after applying a decycling family $X_1, X_2, \ldots, X_k$.
    There exists a transitive tournament $U^*$ on the same vertex set with $U \subseteq U^*$. Inverting the sets $X_1, X_2, \ldots, X_k$ in $U^*$ gives a tournament $D^*$ with $D \subseteq D^*$. Clearly $\inv(D^*) \le k$ and so by Observation \ref{obs:inv subgraph}, $\inv(D^*) = k$.
\end{proof}

We will also use the following simple results that consider the effect that the removal of a single vertex has on the inversion number of an oriented graph. 

\begin{proposition}
\label{prop:source/sink}
    Let $D$ be an oriented graph on $n\geq 2$ vertices. If $v\in V(D)$ is a sink or a source, then $\inv(D-v)=\inv(D)$. 
\end{proposition}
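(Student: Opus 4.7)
The plan is to prove the two inequalities $\inv(D-v) \leq \inv(D)$ and $\inv(D) \leq \inv(D-v)$ separately, the first being immediate from Observation~\ref{obs:inv subgraph} since $D-v$ is a subgraph of $D$. The content of the proposition is therefore the reverse inequality.

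For the non-trivial direction, I would handle the sink case and note that the source case is symmetric (either by reversing all edges of $D$, which does not change the inversion number, or by a symmetric argument). Let $k = \inv(D-v)$ and fix a decycling family $X_1, \ldots, X_k \subseteq V(D-v)$ of $D - v$. The key observation is that each $X_i$ is a subset of $V(D) \setminus \{v\}$, so inverting $X_i$ in $D$ affects only edges with both endpoints in $V(D-v)$ and leaves the edges incident to $v$ untouched. Consequently, after applying the inversions $X_1, \ldots, X_k$ to $D$, the vertex $v$ is still a sink, and the oriented graph induced on $V(D) \setminus \{v\}$ is exactly the acyclic oriented graph obtained by applying $X_1, \ldots, X_k$ to $D-v$.

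It remains to note that an oriented graph with a sink $v$ is acyclic if and only if deleting $v$ yields an acyclic oriented graph: any directed cycle must avoid $v$ (since $v$ has out-degree zero), so a cycle exists in the full graph exactly when one exists after removing $v$. Thus the inversions $X_1, \ldots, X_k$ form a decycling family of $D$, yielding $\inv(D) \leq k = \inv(D-v)$, which combined with the subgraph inequality gives equality.

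There is no real obstacle here; the only thing to be careful about is to verify that the sets $X_i$ from the decycling family of $D-v$ really can be used verbatim in $D$ without having to add or remove $v$, and that the sink property is preserved throughout the sequence of inversions — both of which follow directly from the fact that $v \notin X_i$ for all $i$.
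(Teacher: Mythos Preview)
Your proof is correct and follows exactly the same approach as the paper's own proof: both use Observation~\ref{obs:inv subgraph} for one inequality and then argue that a decycling family of $D-v$ is automatically a decycling family of $D$ because the inversions do not touch $v$ and a sink (or source) cannot lie on any directed cycle. Your version simply spells out the details that the paper's two-line proof leaves implicit.
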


\begin{proof}
    By \Cref{obs:inv subgraph}, $\inv(D-v)\leq \inv(D)$. Now, since $v$ is a sink or a source, a decycling family of $D-v$ is also a decycling family of $D$. Hence, $\inv(D-v)\geq \inv(D)$.  
\end{proof}

Given an oriented graph $D$, we say that $u,v\in V(D)$ are \emph{twin} vertices if $N^+(u)\setminus\{v\}=N^+(v)\setminus\{u\}$ and $N^-(u)\setminus\{v\}=N^-(v)\setminus\{u\}$. 

\begin{proposition}
\label{prop:twin}
    Let $D$ be an oriented graph on $n\geq 2$ vertices. If $u,v\in V(D)$ are twin vertices, then $\inv(D-v)=\inv(D)$. 
\end{proposition}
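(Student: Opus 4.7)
The plan is to show both inequalities; the easy direction $\inv(D-v)\le \inv(D)$ follows immediately from Observation~\ref{obs:inv subgraph}, so the substantive task is to prove $\inv(D)\le \inv(D-v)$. To do this, I would start with an optimal decycling family $X_1,\ldots,X_k$ of $D-v$, where $k=\inv(D-v)$, and promote it to a decycling family of $D$ of the same size by ``copying'' the role of $u$ onto $v$. Concretely, I would define $X_i' = X_i\cup\{v\}$ whenever $u\in X_i$, and $X_i'=X_i$ otherwise.

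The next step is to check that $X_1',\ldots,X_k'$ decycles $D$. First, for any edge whose endpoints both lie in $V(D)\setminus\{v\}$, the sets $X_i'$ and $X_i$ agree on its endpoints, so after all inversions the induced sub-orientation on $V(D)\setminus\{v\}$ is exactly the acyclic orientation produced by applying $X_1,\ldots,X_k$ to $D-v$. Second, I would verify that the twin relation between $u$ and $v$ is preserved by this modified family: for any $w\in V(D)\setminus\{u,v\}$, the edge $vw$ is reversed at step $i$ iff $v,w\in X_i'$ iff $u,w\in X_i$, which is precisely the condition for $uw$ to be reversed. Hence in the resulting oriented graph $D'$, the vertices $u$ and $v$ still satisfy $N^+(u)\setminus\{v\} = N^+(v)\setminus\{u\}$ and $N^-(u)\setminus\{v\}=N^-(v)\setminus\{u\}$.

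The main obstacle, and the only place requiring real thought, is ruling out cycles in $D'$ that pass through $v$. I would argue by contradiction: suppose $C=v_1\to v_2\to\cdots\to v_m\to v_1$ is a directed cycle in $D'$ with $v\in V(C)$. If $u\notin V(C)$, replacing $v$ by $u$ in $C$ (using the twin relation in $D'$) yields a cycle in $D'-v$, contradicting the acyclicity established above. If $u\in V(C)$, I may take $v_1=u$ and $v_j=v$ with $2\le j\le m$, and then use the twin relation to replace the edge incident to $v$ at one end by the corresponding edge at $u$: if $j<m$, the edge $v\to v_{j+1}$ in $D'$ gives an edge $u\to v_{j+1}$ in $D'$, yielding the cycle $u\to v_{j+1}\to\cdots\to v_m\to u$ in $D'-v$; if $j=m$, the edge $v_{m-1}\to v$ gives an edge $v_{m-1}\to u$, yielding the cycle $u\to v_2\to\cdots\to v_{m-1}\to u$. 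In each case, the resulting shorter walk is a genuine cycle in $D'-v$ (any degeneracy, such as it having length~$2$, would violate simplicity, which is preserved under inversions), again contradicting the acyclicity of $D'-v$. This forces $D'$ to be acyclic, completing the proof.
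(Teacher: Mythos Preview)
Your proof is correct and follows exactly the same construction as the paper: define the lifted sets by adjoining $v$ to each $X_i$ that contains $u$. The paper simply asserts that the resulting family is decycling, whereas you supply the verification (twin relation preserved, cycles through $v$ shortened to cycles in $D'-v$), so your argument is a more detailed version of the same approach.
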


\begin{proof}
    By \Cref{obs:inv subgraph}, $\inv(D-v)\leq \inv(D)$. Now, suppose $X_1,...,X_k$ is a decycling family of $D-v$. For $1\leq i\leq k$, let $Y_i=X_i$ if $u\notin X_i$, and $Y_i=X_i\cup \{v\}$ if $u\in X_i$. Then $Y_1,...,Y_k$ is a decycling family of $D$, and $\inv(D-v)\geq \inv(D)$.
\end{proof}

Finally, we have the following trivial upper bound on the inversion number of an oriented graph. 

\begin{proposition}
\label{prop:inv upper bound}
    For an oriented graph $D$ of order $n\geq 1$, we have $\inv(D) \le n-1$.
\end{proposition}

\begin{proof}
    Note that the inversion number of an oriented graph on one vertex is clearly zero, and assume the statement of the proposition is true for oriented graphs of order $n-1$.
    Let $v$ be a vertex in $D$, and let $X=N^+(v)\cup\{v\}$. Inverting $X$ in $D$, gives an oriented graph $D'$ in which $v$ is a sink. Hence, using Proposition~\ref{prop:source/sink}, $\inv(D') = \inv(D' - v) \leq n -2$, and $\inv(D) \leq \inv(D') + 1 \leq n -1$.
\end{proof}

\section{Subgraph complementation and tournament minimum rank}
\label{sec:subgraph_complement}
In this section, we discuss a natural connection between the subgraph complementation number, as studied in~\cite{buchanan2022subgraph}, and the inversion number. 
Importantly, we will show that the inversion number is closely related to the lowest rank of a matrix from a particular set of matrices, a key step in our proof of \Cref{thm:D_dijoin_D_general}.

\subsection{Background on subgraph complementation}
In order to state the results of Buchanan, Purcell and Rombach~\cite{buchanan2022subgraph}, we first require the following definitions.

Given an (undirected) graph $G$ of order $n\geq 1$ and a set $X\subseteq V(G)$,  the \emph{subgraph complementation} of $X$ in $G$ is the graph obtained from $G$ by complementing the edges in $G[X]$. In this case, we say that we \emph{complement} $X$ in $G$. Given a family of sets $X_1,...,X_k\subseteq V(G)$, the \emph{subgraph complementation} of $X_1,...,X_k$ in $G$ is the graph obtained by complementing each set in turn: complementing $X_1$ in $G$, then $X_2$ in the resulting graph, and so on. Note that the order in which we perform these subgraph complementations does not impact the final graph. 

If complementing $X_1,...,X_k$ in $G$ results in the empty graph $\overline{K_n}$, then these sets form a \emph{subgraph complementing system} of $G$. 
The \emph{subgraph complementation number} of $G$, denoted by $c_2(G)$, is the minimum number of sets in a subgraph complementing system of $G$. 

Note that $\mathcal{F}$ is a subgraph complementing system of $G$ if and only if each pair of adjacent vertices appears together in an odd number of sets in $\mathcal{F}$, while each pair of non-adjacent vertices appears together in an even number of sets in $\mathcal{F}$. 

Let $\M(G)$ be the collection of all $n\times n$ matrices with entries in $\{0,1\}$ that can be obtained from the adjacency matrix\footnote{Recall that the \emph{adjacency matrix} of $G$, denoted $A(G)$, is the $n \times n$ matrix such that $A_{i,j} = 1$ whenever $ij \in E(G)$ and 0 otherwise (including on the diagonal).} of $G$ by altering diagonal entries.
In this paper the rank of a matrix is always taken over $\F_2$ and we will refer to the rank of a matrix taken over $\F_2$ as simply the rank. Define the \emph{minimum rank} of a graph $G$, denoted by $\mr(G)$, to be the minimum rank of a matrix in $\M(G)$. 

Buchanan, Purcell, and Rombach~\cite{buchanan2022subgraph} showed that the quantities $\mr(G)$ and $c_2(G)$ cannot differ by more than $1$. In addition, they characterised the graphs for which they differ. 

\begin{lemma}[\cite{buchanan2022subgraph} Corollary 4.7 and Theorem 4.12] \label{lem:minrank}
    Let $G$ be a graph. Then either
    \begin{enumerate}
        \item $c_2(G) = \mr(G)$, or
        \item $c_2(G) = \mr(G) + 1$, in which case $\mr(G)$ is even.
    \end{enumerate}
    Moreover, if $G$ has at least one edge, then $c_2(G) = \mr(G) + 1$ if and only if there is a unique matrix $M \in \M(G)$ of minimum rank and all of the diagonal entries of this matrix are equal to zero.
\end{lemma}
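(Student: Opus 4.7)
The plan is to recast the subgraph complementation number as a matrix decomposition problem over $\F_2$ and then appeal to the theory of symmetric bilinear forms in characteristic two. First, identify each subset $X_\ell \subseteq V(G)$ with its characteristic vector $\mathbf{x}_\ell \in \F_2^n$. A pair $\{i,j\}$ with $i \neq j$ is contained in $X_\ell$ if and only if $(\mathbf{x}_\ell)_i (\mathbf{x}_\ell)_j = 1$, so the parity condition defining a subgraph complementing system translates directly into: the off-diagonal entries of $\sum_{\ell=1}^k \mathbf{x}_\ell \mathbf{x}_\ell^T$ agree with those of $A(G)$. Equivalently, $\sum_{\ell=1}^k \mathbf{x}_\ell \mathbf{x}_\ell^T \in \M(G)$. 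Since this matrix has rank at most $k$, one immediately obtains $\mr(G) \le c_2(G)$, and the question reduces to: for each $M \in \M(G)$, what is the minimum number of rank-one symmetric summands $\mathbf{x}\mathbf{x}^T$ needed to realise $M$?

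The main technical ingredient is the following structure result, which I would prove using the normal form of symmetric bilinear forms over $\F_2$. Call a symmetric matrix \emph{alternating} if all its diagonal entries vanish. Any non-alternating symmetric matrix of rank $r$ is congruent to $I_r \oplus 0$ and hence equals $\sum_{\ell=1}^r \mathbf{x}_\ell \mathbf{x}_\ell^T$ for appropriate vectors. Any alternating symmetric matrix has even rank $r$ (its non-degenerate quotient decomposes into hyperbolic blocks) and requires exactly $r+1$ rank-one summands: the lower bound is because $\sum_\ell \mathbf{x}_\ell \mathbf{x}_\ell^T = XX^T$ has diagonal $\sum_\ell \mathbf{x}_\ell$ (using $t^2=t$ in $\F_2$), so zero diagonal forces $\sum_\ell \mathbf{x}_\ell = 0$, making the $\mathbf{x}_\ell$ linearly dependent and giving $\rank(XX^T) \le \rank(X) < r$; the upper bound follows by choosing a nonzero $\mathbf{w}$ in the column space of $M$ such that $M + \mathbf{w}\mathbf{w}^T$ is non-alternating of the same rank $r$, decomposing it with $r$ summands, and restoring $\mathbf{w}\mathbf{w}^T$.

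Putting everything together, for each $M \in \M(G)$ the minimum number of rank-one summands realising $M$ is $\rank(M)$ if $M$ is non-alternating and $\rank(M)+1$ otherwise. Hence $c_2(G) = \mr(G)$ provided that some minimum-rank $M \in \M(G)$ is non-alternating; otherwise $c_2(G) = \mr(G)+1$ and $\mr(G)$ is even. In the latter case every minimum-rank $M \in \M(G)$ has zero diagonal, and since any two matrices in $\M(G)$ agree off the diagonal, such a minimum-rank matrix is necessarily unique, yielding the claimed characterisation. I expect the main obstacle to be the delicate construction of $\mathbf{w}$ in the alternating case: one must shift $M$ by a rank-one symmetric perturbation inside its own column space without dropping the rank, and this requires a careful appeal to the classification of quadratic and symmetric bilinear forms over $\F_2$ (where the distinction between the bilinear form and its associated quadratic form is genuine).
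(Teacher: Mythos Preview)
Your proposal is correct and coincides with the approach the paper sketches: the paper does not prove this lemma itself (it is cited from \cite{buchanan2022subgraph}) but remarks that the proof there goes via faithful orthogonal representations $\phi\colon V(G)\to\F_2^d$, which is exactly your Gram-matrix reformulation $\sum_\ell \mathbf{x}_\ell\mathbf{x}_\ell^T\in\M(G)$ with $\phi(v)_\ell=(\mathbf{x}_\ell)_v$.

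One remark: the step you flag as the ``main obstacle'' is in fact not delicate. For a nonzero alternating $M$ of rank $r$, \emph{any} nonzero $\mathbf{w}$ in the column space of $M$ works. Indeed, $M+\mathbf{w}\mathbf{w}^T$ has column space contained in that of $M$, so its rank is at most $r$; and if $(M+\mathbf{w}\mathbf{w}^T)v=0$ for some $v\notin\ker M$, then $\mathbf{w}^T v\neq 0$ and $Mv=\mathbf{w}$, whence $\mathbf{w}^T v=v^T M v=0$ since $M$ is alternating, a contradiction. Thus $\rank(M+\mathbf{w}\mathbf{w}^T)=r$ and the matrix is non-alternating, so it decomposes into $r$ rank-one summands as you outline.
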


Although it will not be directly relevant for our application of the result, the interested reader might like to know that the proof of Lemma~\ref{lem:minrank} uses an equivalent form of the problem. A $d$-dimensional \emph{faithful orthogonal representation} of a graph $G$ over the field $\F_2$ is a function $\phi:V(G)\rightarrow \F_2^d$ where non-adjacent vertices are assigned orthogonal vectors; that is, $\phi(u)\cdot \phi(v)= 0$ for all $uv\notin E(G)$, and adjacent vertices are assigned non-orthogonal vectors; that is, $\phi(u)\cdot \phi(v) = 1$. 
The $d$-dimensional faithful orthogonal representations of $G$ over $\F_2$ are in bijective correspondence with the subgraph complementation systems of $G$, where a representation $\phi$ corresponds to the system $\{X_1,...,X_d\}$ with $v\in V(G)$ included in $X_i$ if and only if the $i$th entry of $\phi(v)$ is $1$.
This approach bears a strong similarity to the analysis used in~\cite{aubian2022problems} to prove that for every odd $k \ge 3$ there is a tournament $D_1$ with $\inv(D_1) = k$ such that for any oriented graph $D_2$ with $\inv(D_2) \ge 1$, we have $\inv(D_1\rightarrow D_2) \le \inv(D_1) + \inv(D_2) - 1$. 

\subsection{Tournament minimum rank}

Let $D$ be a tournament on $n$ vertices and $T$ a transitive tournament on the same vertex set. Define $G_{D,T}$ to be the (undirected) graph on the same vertex set as $D$ with the edge $ij$ present if and only if the edge between vertices $i$ and $j$ has opposite orientations in $D$ and $T$.
Clearly, a series of inversions that takes $D$ to $T$ corresponds exactly to a subgraph complementing system of $G_{D,T}$.

Let $\mathcal{T}$ be the collection of all (labelled) $n$-vertex transitive tournaments and define 
\[\M^*(D) = \bigcup_{T \in \mathcal{T}} \M(G_{D,T}).\] Define the \emph{tournament minimum rank} of a tournament $D$, denoted $\tmr(D)$, as:
\[\tmr(D):= \min \{\rank(M): M \in \M^*(D)\}.\]
Equivalently, $\tmr(D) = \min_{T \in \mathcal{T}} \mr(G_{D,T})$.

The following result is a direct consequence of \Cref{lem:minrank} and provides a useful relationship between $\inv(D)$ and $\tmr(D)$.

\begin{corollary}[Corollary to Lemma \ref{lem:minrank}] \label{cor:tournament_minrank}
    Let $D$ be a tournament. Then either 
    \begin{enumerate}
        \item $\inv(D) = \tmr(D)$, or
        \item $\inv(D) = \tmr(D) + 1$, in which case $\tmr(D)$ is even.
    \end{enumerate}
    Moreover, if $D$ is not transitive, then $\inv(D) = \tmr(D) + 1$ if and only if every matrix $M \in \M^*(D)$ with minimum rank has every diagonal entry equal to zero.
\end{corollary}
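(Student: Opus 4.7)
The plan is to convert everything to the undirected setting via the graphs $G_{D,T}$ and apply Lemma~\ref{lem:minrank} pointwise in $T$. First I would record the identity
\[
\inv(D) \;=\; \min_{T \in \mathcal{T}} c_2(G_{D,T}),
\]
which follows from the observation made just before the statement of the corollary: inverting $X_1,\dots,X_k$ in $D$ flips precisely the edges whose endpoints lie together in an odd number of the $X_i$, and the result equals $T$ iff this set of flipped edges coincides with $E(G_{D,T})$, i.e.\ iff $X_1,\dots,X_k$ is a subgraph complementing system of $G_{D,T}$. Together with $\tmr(D) = \min_T \mr(G_{D,T})$, the dichotomy $c_2(G_{D,T}) \in \{\mr(G_{D,T}),\, \mr(G_{D,T})+1\}$ from Lemma~\ref{lem:minrank} gives $\inv(D) \ge \tmr(D)$ termwise and, choosing $T^*$ that attains $\tmr(D)$, also $\inv(D) \le c_2(G_{D,T^*}) \le \tmr(D)+1$.

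For the parity claim in case~(2), suppose $\inv(D) = \tmr(D)+1$. If some $T$ with $\mr(G_{D,T}) = \tmr(D)$ satisfied $c_2(G_{D,T}) = \mr(G_{D,T})$, then $\inv(D) \le \tmr(D)$, a contradiction. Hence every such $T$ satisfies $c_2(G_{D,T}) = \mr(G_{D,T})+1$, and Lemma~\ref{lem:minrank} then forces $\tmr(D) = \mr(G_{D,T})$ to be even.

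For the moreover statement, I would use that $D$ non-transitive implies $G_{D,T}$ is nonempty for every $T \in \mathcal{T}$, so the full strength of Lemma~\ref{lem:minrank} is available. For the forward direction, take any minimum-rank $M \in \M^*(D)$; by definition $M \in \M(G_{D,T})$ for some $T$, and then $\mr(G_{D,T}) = \tmr(D)$, so the previous paragraph gives $c_2(G_{D,T}) = \mr(G_{D,T})+1$, and the ``moreover'' clause of Lemma~\ref{lem:minrank} identifies $M$ as the unique minimum-rank matrix in $\M(G_{D,T})$ with zero diagonal. For the reverse direction, assume every minimum-rank $M \in \M^*(D)$ has zero diagonal. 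For each $T$ with $\mr(G_{D,T}) = \tmr(D)$, every minimum-rank matrix of $\M(G_{D,T})$ must then have zero diagonal; since two matrices in $\M(G_{D,T})$ agree off the diagonal, at most one can have an all-zero diagonal, so the minimum-rank matrix of $\M(G_{D,T})$ is unique with zero diagonal. Lemma~\ref{lem:minrank} yields $c_2(G_{D,T}) = \tmr(D)+1$ for every such $T$, while $c_2(G_{D,T}) > \tmr(D)$ holds trivially for all other $T$; minimizing gives $\inv(D) \ge \tmr(D)+1$, and combined with the upper bound this is equality.

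Since the corollary is really bookkeeping on top of Lemma~\ref{lem:minrank}, I do not expect any serious obstacle. The only point needing care is the short observation that two matrices in $\M(G)$ with all-zero diagonal must coincide; this is what bridges the hypothesis ``every minimum-rank matrix has zero diagonal'' with the uniqueness required in the ``moreover'' clause of Lemma~\ref{lem:minrank}.
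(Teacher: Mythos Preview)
Your argument is correct and follows exactly the paper's approach: reduce to the graphs $G_{D,T}$, use $\inv(D) = \min_T c_2(G_{D,T})$ together with $\tmr(D) = \min_T \mr(G_{D,T})$, and apply Lemma~\ref{lem:minrank} termwise. You have spelled out more carefully than the paper does the two directions of the ``moreover'' biconditional (in particular the observation that the all-zero-diagonal matrix in $\M(G_{D,T})$ is unique, which supplies the uniqueness hypothesis needed to invoke Lemma~\ref{lem:minrank}), but this is just an elaboration of the same proof rather than a different one.
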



\begin{proof}
        Let $T$ be an $n$-vertex transitive tournament. If transforming $D$ into $T$ requires $\ell$ inversions, then $c_2(G_{D,T}) = \ell$. So, by the definition of the inversion number,
        \[\inv(D) = \min_{T \in \mathcal{T}} c_2(G_{D,T}).\]
        By Lemma~\ref{lem:minrank}, we have that 
        $\mr(G_{D,T}) \leq c_2(G_{D,T}) \leq \mr(G_{D,T}) + 1$,
        and so
        \[\tmr(D) = \min_{T \in \mathcal{T}} \mr(G_{D,T}) \leq \min_{T \in \mathcal{T}} c_2(G_{D,T}) \leq \min_{T \in \mathcal{T}} \mr(G_{D,T}) + 1 = \tmr(D) + 1.\]
        Furthermore, the equality $\inv(D) = \tmr(D) + 1$ holds if and only if $c_2(G_{D,T}) = \mr(G_{D,T}) + 1$ for every $T \in \mathcal{T}$ with $\mr(G_{D,T}) = \tmr(D)$. This immediately implies that, if $\inv(D) = \tmr(D) + 1$, then $\tmr(D)$ must be even.

        Moreover, if $D$ is not transitive, then $G_{D,T}$ contains at least one edge for any $T \in \mathcal{T}$ and Lemma~\ref{lem:minrank} tells us that, if $c_2(G_{D,T}) = \mr(G_{D,T}) + 1$, then there is a unique matrix of minimum rank in $\M(G_{D,T})$ and it has all zeroes on the diagonal. Hence, if $\inv(D) = \tmr(D) + 1$, this is true of every $T$ with $\mr(G_{D,T}) = \tmr(D)$ and every matrix in $\M^*(D)$ of minimum rank has zeros on the diagonal.
\end{proof}

It is interesting to note that all of the examples in~\cite{alon2024invertibility,aubian2022problems} of pairs of graphs $D_1,D_2$ with $\inv(D_1\rightarrow D_2) < \inv(D_1) + \inv(D_2)$ have that $D_i$ is a tournament with $\inv(D_i) = \tmr(D_i) + 1$ for at least one $i$. In fact, the following result holds.
\begin{theorem} \label{thm:generating_examples}
    Let $D_1$ be a tournament with $\inv(D_1) = \tmr(D_1) + 1$, and let  $D_2$ be any oriented graph with $\inv(D_2) \ge 1$. Then $\inv(D_1\rightarrow D_2) \le \inv(D_1) + \inv(D_2) - 1$.
\end{theorem}
This can be proved using a similar argument to that used in~\cite{aubian2022problems}. Alternatively, we can directly apply Corollary~\ref{cor:tournament_minrank}, as follows.
\begin{proof}
    Let $D_1$ be a tournament satisfying $\inv(D_1) = \tmr(D_1) + 1$. Applying Proposition~\ref{prop:digraph_to_tournament}, let $D_2^*$ be a tournament containing $D_2$ with $ \inv(D_2^*)  =  \inv(D_2) $. We see that
\begin{align*}
    \inv(D_1 \rightarrow D_2) \le   \inv(D_1 \rightarrow D_2^*) &\le \tmr(D_1 \rightarrow D_2^*) + 1 \\
    &\le \tmr(D_1) + \tmr(D_2^*) + 1 \\
    &\le \inv(D_1) + \inv(D_2^*) = \inv(D_1) + \inv(D_2).
\end{align*}
Suppose for a contradiction that we have equality. Then
\begin{align}
    \inv(D_1 \rightarrow D_2^*) &= \tmr(D_1 \rightarrow D_2^*) + 1, \label{eqn:1}\\
    \tmr(D_1 \rightarrow D_2^*) &= \tmr(D_1) + \tmr(D_2^*), \quad \text{and} \label{eqn:2}\\
    \inv(D_2^*) &= \tmr(D_2^*). \label{eqn:3}
\end{align}
Using~\eqref{eqn:3} and $\inv(D_2^*) \ge 1$ (so $D_2^*$ is not transitive), Corollary~\ref{cor:tournament_minrank} tells us that there is some minimum rank matrix $M_2 \in \M^*(D_2^*)$ with a non-zero diagonal entry. Then, letting $M_1$ be a minimum rank matrix in $\M^*(D_1)$, the matrix
\[
\begin{bmatrix}
    M_1 & 0 \\ 0 & M_2
\end{bmatrix}
\]
is a matrix in $\M^*(D_1 \rightarrow D_2^*)$ with a non-zero entry on the diagonal and rank $\tmr(D_1) + \tmr(D_2^*)$, which is equal to $ \tmr(D_1 \rightarrow D_2^*)$ by~\eqref{eqn:2}. Applying Corollary~\ref{cor:tournament_minrank} again, this implies that $\inv(D_1 \rightarrow D_2^*) = \tmr(D_1 \rightarrow D_2^*)$, contradicting~\eqref{eqn:1}.
\end{proof}

\section{Proof of Theorem~\ref{thm:D_dijoin_D_general}}
\label{sec:proof}
In Problem 5.5  of \cite{aubian2022problems}, Aubian, Havet, H\"{o}rsch, Klingelhoefer, Nisse, Rambaud, and Vermande ask whether there exists a non-acyclic oriented graph $D$ such that $\inv({D \rightarrow D}) = \inv(D)$. 
The goal of this section is to prove Theorem~\ref{thm:D_dijoin_D_general}, and answer this question in the negative. We restate it below for convenience.

\main*

In fact, we may focus our attention exclusively on the tournament case.

\begin{lemma}
\label{lem:D_dijoin_D}
    Let $D_1$ and $D_2$ be tournaments such that $\inv(D_1) = \inv(D_2) \ge 1$. Then $\inv({D_1 \rightarrow D_2}) > \inv(D_1)$.
\end{lemma}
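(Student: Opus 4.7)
I argue by contradiction: assume $\inv(D_1 \to D_2) \le k$. The trivial bound $\inv(D_1 \to D_2) \ge \max\{\inv(D_1),\inv(D_2)\} = k$ then forces $\inv(D_1 \to D_2) = k$. Fix a minimum-rank matrix $M \in \M^*(D_1 \to D_2)$ associated to a transitive tournament $T$ on $V(D_1 \to D_2)$, and write it in block form
\[ M = \begin{bmatrix} A & B \\ B^T & C \end{bmatrix} \]
according to the bipartition $V(D_1) \sqcup V(D_2)$. With $T_i := T|_{V(D_i)}$, we have $A \in \M^*(D_1)$ and $C \in \M^*(D_2)$. Since every cross-edge of $D_1 \to D_2$ points from $V(D_1)$ to $V(D_2)$ and $T$ linearly extends $<_{T_1}$ and $<_{T_2}$, the entry $B_{u,v}$ equals $[v <_T u]$; ordering rows of $B$ by $<_{T_1}$ and columns by $<_{T_2}$ exhibits the \emph{staircase} form of $B$: row $u$ is an initial segment of $p(u) := |\{v \in V(D_2) : v <_T u\}|$ ones, where $p$ is non-decreasing along $<_{T_1}$.

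By Corollary~\ref{cor:tournament_minrank}, $\tmr(D_1 \to D_2) \in \{k - 1, k\}$. If $\tmr(D_1 \to D_2) = k$, pick $M$ with a non-zero diagonal entry; we may assume this entry lies in $A$, the analysis when it lies in $C$ being symmetric. If $\rank(A) < k$ then $\rank(A) \ge \tmr(D_1) \ge k - 1$ forces $\rank(A) = \tmr(D_1) = k - 1$, so $A$ is a minimum-rank matrix of $D_1$ with a non-zero diagonal, and Corollary~\ref{cor:tournament_minrank} yields $\inv(D_1) = k - 1 < k$, a contradiction. Hence $\rank(A) = k$. If instead $\tmr(D_1 \to D_2) = k - 1$, then every minimum-rank matrix has zero diagonal, and rank inequalities force $\rank(A) = \rank(C) = k - 1$, making $A$ and $C$ minimum-rank matrices of $D_1$ and $D_2$; combined with $\inv(D_i) = k$, Corollary~\ref{cor:tournament_minrank} forces $\tmr(D_i) = k - 1$ (even) with $A$ and $C$ both zero diagonal. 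In either case, the equality $\rank(M) = \rank(A)$ yields the decomposition $B = AY$ and $C = Y^T A Y$ for some matrix $Y$, so
\[ M = \begin{bmatrix} I \\ Y^T \end{bmatrix} A \begin{bmatrix} I & Y \end{bmatrix}. \]

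The principal obstacle is to derive a contradiction from this forced decomposition combined with the staircase constraint on $B = AY$. The staircase structure partitions $V(D_1)$ into at most $\rank(B) + 1 \le k$ layers $V_{p_0}, V_{p_1}, \dots, V_{p_s}$, totally ordered by $<_{T_1}$. The plan is to use this layering to build a matrix $A' \in \M^*(D_1)$ of rank at most $\tmr(D_1)$ with a non-zero diagonal entry, by reversing selected layers of $T_1$ to obtain a new transitive tournament $T_1'$ on $V(D_1)$ and perturbing $A$ by a low-rank update derived from columns of $Y$ so that $A' \in \M(G_{D_1, T_1'})$. Corollary~\ref{cor:tournament_minrank} applied to $A'$ then yields $\inv(D_1) \le \tmr(D_1) < k$, the required contradiction. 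The $k = 1$ case, in which $T_1$ has the two-block form $(V(D_1) \setminus Y) + Y$ and reversing its single non-trivial layer $Y$ realises $D_1$ itself as a transitive tournament (so $\inv(D_1) = 0$), illustrates the target behaviour of this construction.
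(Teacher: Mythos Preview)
Your setup is sound: the block form of $M$, the staircase shape of the off-diagonal block, and the case split on $\tmr(D_1\to D_2)\in\{k-1,k\}$ are all correct, and the factorisation $M=\begin{bmatrix}I\\ Y^T\end{bmatrix}A\begin{bmatrix}I & Y\end{bmatrix}$ does follow once $\rank(M)=\rank(A)$. The problem is that the proof stops at exactly the point where the real work begins. You yourself flag ``the principal obstacle'' and then describe only a \emph{plan}: reverse certain layers of $T_1$ and apply an unspecified low-rank perturbation to produce some $A'\in\M^*(D_1)$ of rank at most $\tmr(D_1)$ with a non-zero diagonal. No construction is given, no rank bound is verified, and no argument shows a non-zero diagonal appears. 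The $k=1$ sketch does not substitute for this: it reuses the symbol $Y$ for a vertex set, and even there the claim that reversing a layer makes $D_1$ transitive is asserted rather than proved.

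There is also a structural issue with Case~1. When $\tmr(D_1\to D_2)=k$ and the non-zero diagonal entry lies in $A$, you correctly deduce $\rank(A)=k$, but the stated plan---building $A'$ of rank at most $\tmr(D_1)$ with a non-zero diagonal---need not yield a contradiction: nothing rules out $\tmr(D_1)=k=\inv(D_1)$, in which case such an $A'$ is perfectly compatible with Corollary~\ref{cor:tournament_minrank}. You would need instead to extract a contradiction on the $D_2$ side (via $C=Y^TAY$), and no mechanism for that is indicated.

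The paper avoids this entirely by passing to a \emph{minimal} counterexample and invoking a short combinatorial lemma (Lemma~\ref{lem:block_matrices}): if $\rank(M)=\rank(A)$, the staircase block forces either twin vertices or a sink in $D_2$, contradicting minimality. This gives $\rank(A),\rank(B)<\rank(M)$ directly, hence $\rank(A)=\rank(B)=k-1$, $\rank(M)=k$, and both diagonal blocks (and so $M$) have zero diagonal; Corollary~\ref{cor:tournament_minrank} then yields $\inv(D_1\to D_2)=k+1$. If you want to salvage your approach, the missing ingredient is precisely a concrete construction replacing your ``plan'' in both cases; as written, the argument is a promising outline but not a proof.
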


Before we prove Lemma \ref{lem:D_dijoin_D}, we demonstrate why this suffices to prove \Cref{thm:D_dijoin_D_general}.

\begin{proof}[Proof of Theorem \ref{thm:D_dijoin_D_general}] 
Suppose for a contradiction that there exist oriented graphs $D_1, D_2$ with $\inv(D_1) = \inv(D_2) =  \inv(D_1\rightarrow D_2)$. Let $k \coloneqq \inv(D_1)$. 

Apply Proposition \ref{prop:digraph_to_tournament} to obtain a tournament $(D_1 \rightarrow D_2)^*$ with $({D_1\rightarrow D_2}) \subseteq ({D_1\rightarrow D_2})^*$ and
\[\inv((D_1\rightarrow D_2)^*) = \inv(D_1\rightarrow D_2) = k.\]
Since $(D_1\rightarrow D_2)^*$  contains every edge of $D_1 \rightarrow D_2$, it is the dijoin of two tournaments $E_1$ and $E_2$, where $E_1 \supseteq D_1$ and  $E_2 \supseteq D_2$. By Observation \ref{obs:inv subgraph}, both $E_1$ and $E_2$ have inversion number at least $k$. Hence  
\[k = \inv((D_1\rightarrow D_2)^*) = \inv(E_1 \rightarrow E_2) \ge \inv(E_1) \ge k\]
and 
\[k = \inv((D_1\rightarrow D_2)^*)= \inv(E_1 \rightarrow E_2) \ge \inv(E_2) \ge k.\]
Therefore, we have two tournaments $E_1,E_2$ with $\inv(E_1) = k = \inv(E_2) = \inv(E_1 \rightarrow E_2)$, contradicting Lemma \ref{lem:D_dijoin_D}.
\end{proof}

In order to prove \Cref{lem:D_dijoin_D}, we will require the following lemma about the structure of certain symmetric matrices. Call an $n \times m$ matrix with entries in $\{0,1\}$ a \emph{staircase matrix} if its entries increase down each column and decrease along each row (so that the $1$s form the shape of a staircase in the bottom left).

\begin{lemma} \label{lem:block_matrices}
    Let $M$ be a symmetric $(n+m) \times (n+m)$ matrix with entries in $\{0,1\}$ of the form 
    \[
        \begin{bmatrix}
        A & C \\ C^T & B
        \end{bmatrix}
    \]
    where $A$ is a symmetric $n \times n$ matrix, $B$ is a symmetric $m \times m$ matrix and $C$ is an $n \times m$ staircase matrix.
    If $m \ge \rank(A) + 1$, then one of the following holds: 
    \begin{enumerate}
        \item  $\rank(M) \ge \rank(A) + 1$, or
        \item there are two adjacent columns of $B$ which are identical, or
        \item the final column of $B$ contains only zeroes.
    \end{enumerate}   
\end{lemma}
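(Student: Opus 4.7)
The plan is to assume that conclusion (1) fails and show that then (2) or (3) must hold. Since $A$ is a submatrix of $M$, failure of (1) gives $\rank(M) = \rank(A) = r$, and restricting $M$ to its top $n$ rows yields $\rank([A\mid C]) = r$ as well, so every column of $C$ lies in the column space of $A$ (in particular $\rank(C) \le r$). Moreover, each of the last $m$ columns of $M$ is an $\F_2$-linear combination of the first $n$, so for each $j \in \{1,\ldots, m\}$ there is a set $S_j \subseteq \{1,\ldots, n\}$ with $c_j = A\chi_{S_j}$ and $b_j = C^{T}\chi_{S_j}$, where $c_j, b_j$ denote the $j$-th columns of $C$ and $B$ and $\chi_{S_j}$ is the characteristic vector of $S_j$.

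Writing $T_k = \{i : C_{i,k}=1\}$ for the support of the $k$-th column of $C$, the staircase property gives $T_1 \supseteq T_2 \supseteq \cdots \supseteq T_m$, with each $T_k$ a suffix of $\{1,\ldots, n\}$, and a direct expansion of $b_j = C^{T}\chi_{S_j}$ shows $B_{k,j} \equiv |S_j \cap T_k| \pmod 2$. Consequently, if two adjacent columns of $C$ coincide, i.e.\ $T_k = T_{k+1}$, then rows $k$ and $k+1$ of $B$ are equal; by symmetry of $B$ so are columns $k$ and $k+1$ of $B$, giving conclusion (2).

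Otherwise the $T_k$ are strictly nested, so the columns of $C$ are pairwise distinct; at most one can be the zero vector, and if one is zero then it must be $c_m$, so $\rank(C) \ge m-1$. Combined with $\rank(C) \le r$ and $m \ge r+1$, this forces $m = r+1$, $\rank(C) = r$, and $c_m = 0$. Now the column space of $C$ has dimension $r$ and is contained in $\operatorname{col}(A)$, which also has dimension $r$, so the two coincide; using symmetry of $A$ (so $\operatorname{col}(A) = \operatorname{row}(A)$) and taking $\F_2$-orthogonal complements then yields $\ker(C^{T}) = \ker(A)$. Since $c_m = A\chi_{S_m} = 0$, we deduce $\chi_{S_m} \in \ker(A) = \ker(C^{T})$, and hence $b_m = C^{T}\chi_{S_m} = 0$, which is conclusion (3). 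The step I expect to be the main obstacle is this kernel identity $\ker(A) = \ker(C^{T})$: it combines the symmetry of $A$ with the dimension match $\rank(C) = \rank(A)$ forced by the staircase structure, and it is exactly what allows the zeroness of $c_m$ to propagate to $b_m$.
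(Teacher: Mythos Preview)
Your proof is correct and follows the same overall strategy as the paper: assume $\rank(M)=\rank(A)=r$, use that the last $m$ columns (equivalently rows, by symmetry of $M$) of $M$ lie in the span of the first $n$, and then exploit the staircase structure of $C$ to force either a repeated column or a zero final column in $B$.

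The one place you diverge is the handling of the case $T_m=\emptyset$. The kernel identity $\ker(A)=\ker(C^{T})$ you prove is valid (it follows exactly as you say from $\operatorname{col}(C)=\operatorname{col}(A)$ together with $\ker(A)=\operatorname{col}(A)^{\perp}$ for symmetric $A$ and $\ker(C^{T})=\operatorname{col}(C)^{\perp}$), but it is not needed. Your own formula $B_{k,j}\equiv|S_j\cap T_k|\pmod 2$ already gives $B_{m,j}\equiv|S_j\cap\emptyset|=0$ for every $j$ once $T_m=\emptyset$, so row $m$ of $B$ --- and hence, by symmetry of $B$, column $m$ --- is zero immediately. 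The paper's proof is precisely this short version, phrased dually as ``each row of $B$ is an $\F_2$-linear combination of rows of $C$, so a zero final column of $C$ forces a zero final column of $B$''; the repeated-column case is handled by the same one-line observation. So the step you flagged as the main obstacle is in fact avoidable, and with that simplification your argument and the paper's coincide.
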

\begin{proof}
    Suppose that $M$ is a matrix of the given form.  Clearly $\rank(M) \ge \rank(A)$, so suppose that $\rank(M) = \rank(A) = k$. It must therefore be possible to write each row of $B$ as a linear combination of rows of $C$ over $\F_2$.
    
    Since  $\rank(M) = k$, it follows that the staircase $C$ must contain at most $k$ distinct non-zero columns (`steps'). Since $m \ge k + 1$, this means that either $C$ contains two consecutive columns with the same entries or $C$ contains a zero column. We split into two cases.

    \begin{case} Suppose that $C$ contains two adjacent columns with the same entries, say column $i$ and $i+1$. Since each row of $B$ can be written as a linear combination of rows of $C$ over $\F_2$, this implies that columns $i$ and $i+1$ of $B$ must also contain the same entries.
    \end{case}

    \begin{case} Suppose that $C$ contains a zero column. Since $C$ is a staircase matrix, the final column of $C$ must be a zero column. Since each row of $B$ can be written as a linear combination of rows of $C$, this implies that the final column of $B$ also contains only zeroes.
    \end{case}
\end{proof}

We are now armed with all the tools necessary to prove Lemma \ref{lem:D_dijoin_D}.

\begin{proof}[Proof of Lemma \ref{lem:D_dijoin_D}]
    Suppose for a contradiction that there exist non-transitive tournaments $D_1$ and $D_2$ with $\inv(D_1) = \inv(D_2) = \inv({D_1\rightarrow D_2})$. Take $D_1,D_2$ to be tournaments with this property such that $|V(D_1)| + |V(D_2)|$ is minimal, and let $n_1 := |V(D_1)|$ and $n_2 := |V(D_2)|$. Let $n:=n_1+n_2$ and $k \coloneqq \inv(D_1)$. Note that Proposition \ref{prop:inv upper bound} tells us that $k+1 \le n_1, n_2$. 

    Our goal is to show that every minimum rank matrix in $\M^*(D_1 \rightarrow D_2)$ has rank $k$ and only zero entries on the diagonal. Then, by Corollary \ref{cor:tournament_minrank}, we are able to conclude that $\inv({D_1 \rightarrow D_2}) = k+1$ to obtain a contradiction.

     Suppose that $M$ is a matrix of minimum rank in $\M^*(D_1 \rightarrow D_2)$, where $M\in \M(G_{D_1\rightarrow D_2,T})$ for some transitive tournament $T$. Note that $T$ naturally induces an order $\prec$ on its vertices, with the source as the first vertex and the sink as the last vertex.
    We fix a different ordering $\phi:V(T)\rightarrow[n]$ of the vertices of $T$ (and thus of $D_1 \rightarrow D_2$), which is obtained by first taking all vertices in the copy of $D_1$ in the order induced by the natural ordering on $T$, and then taking all vertices in the copy of $D_2$ in the order induced by $T$. That is, $\phi(u) < \phi(v)$ if and only if either $u\in V(D_1)$ and $v\in V(D_2)$, or $u,v\in V(D_1)$ and $u\prec v$, or $u,v\in V(D_2)$ and $u \prec v$.
    
    Note that permuting both the rows and the columns of $M$ by a given permutation does not change the rank of $M$ or the diagonal entries. Therefore, we may, and will, assume that our matrix $M$ has rows and columns ordered according to the vertex ordering $\phi$.   
    
    Now, since $M$ has minimum rank, by \Cref{cor:tournament_minrank}, $\rank(M) \in \{k-1,k\}$.
    Moreover, by our choice of vertex order, $M$ has the form
    \begin{equation}\label{eq:rank}
        \begin{bmatrix}
        A & C \\ C^T & B
        \end{bmatrix}
    \end{equation}
    where $A$ is an $n_1 \times n_1$ symmetric matrix, $B$ is an $n_2 \times n_2$ symmetric matrix and $C$ is an $n_1 \times n_2$ staircase matrix. 
    To see that $C$ is indeed a staircase matrix, consider a 1 in $C$, and suppose it corresponds to the edge $uv$ where $u \in V(D_1)$ and $v \in V(D_2)$. Since this entry, which we denote $C_{u,v}$, is a 1, we have $v \prec u$. An entry $C_{u,v'}$ to the left of $C_{u,v}$ corresponds to an edge between $u$ and some vertex $v' \in V(D_2)$ with $v' \prec v$. Hence, $v' \prec v \prec u$ and the entry $C_{u,v'}$ must also be a 1. Similarly an entry below $C_{u,v}$ corresponds to the edge between $v$ and some vertex $u'$ with $u \prec u'$, it must also be a 1 as we have $v \prec u \prec u'$.
    
    
    Clearly $\rank(A) \le \rank(M) \le k$. 
    Since $A \in \M^*(D_1)$, by \Cref{cor:tournament_minrank},
    \[\rank(A) \ge \tmr(D_1) \ge \inv(D_1) -1 = k-1.\] The corresponding inequalities also hold for $B$, and thus $\rank(A), \rank(B) \in \{k-1,k\}$. 

    \begin{claim}\label{cl:rank}
        $\rank(A) = \rank(B) = k-1$ and $\rank(M) = k$.
    \end{claim}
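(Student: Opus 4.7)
The plan is to apply Lemma~\ref{lem:block_matrices} to $M$, using the minimality of $n_1 + n_2$ to rule out the two ``degenerate'' cases (2) and (3) so that case (1) must hold, and then to repeat the argument on a reversed copy of $M$ to handle $B$. As setup, note that since $A$ is a principal submatrix of $M$ we have $\rank(A) \le \rank(M) \le k$, while Corollary~\ref{cor:tournament_minrank} gives $\rank(A) \ge \tmr(D_1) \ge k-1$; the same bounds hold for $B$. In particular $n_2 \ge k+1 \ge \rank(A)+1$, so Lemma~\ref{lem:block_matrices} applies to $M$.

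The main obstacle is translating the two degenerate conclusions of the lemma into graph-theoretic statements that contradict minimality. Suppose case (2) holds, so some adjacent columns $i, i+1$ of $B$ agree; by symmetry of $B$ the corresponding rows also agree, and since $B \in \M^*(D_2)$ the two vertices $u, v$ --- consecutive in the restriction of $T$ to $V(D_2)$ --- share all off-diagonal entries in $G_{D_2, T|_{V(D_2)}}$. Consecutivity forces every other $D_2$-vertex to lie on the same side of both $u$ and $v$ in $T|_{V(D_2)}$, so the matrix-level equality implies that $u$ and $v$ are twins in $D_2$; they remain twins in $D_1 \to D_2$ because every $D_1$-vertex is an in-neighbour of both. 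Applying Proposition~\ref{prop:twin} twice and using the identity $(D_1 \to D_2) - v = D_1 \to (D_2 - v)$ then yields a strictly smaller pair $D_1, D_2 - v$ satisfying the same hypotheses, contradicting minimality. In case (3), the last $D_2$-vertex in the $T$-ordering has a zero column in $B$, so it is a sink in $D_2$ (and hence in $D_1 \to D_2$), and Proposition~\ref{prop:source/sink} yields the same contradiction. Case (1) therefore holds, giving $\rank(M) \ge \rank(A)+1$; combined with the bounds from the setup this forces $\rank(A) = k-1$ and $\rank(M) = k$.

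To deduce $\rank(B) = k-1$, I repeat the argument on the matrix $\tilde{M}$ obtained from $M$ by reversing the global vertex ordering (so $D_2$-vertices come first, in reversed $T$-order, followed by $D_1$-vertices in reversed $T$-order). This permutation preserves rank and diagonal entries, and a quick check shows that reversing both axes of a staircase matrix produces another staircase, so $\tilde{M}$ has the block form required by Lemma~\ref{lem:block_matrices}, with permuted copies of $B$ and $A$ occupying the top-left and bottom-right blocks respectively. Applying the lemma produces analogous conclusions about $A$: case (2) gives twin vertices in $D_1$, while case (3) gives a source of $D_1$ (the first $D_1$-vertex in the $T$-ordering), which is also a source of $D_1 \to D_2$ since no $D_2$-vertex points into $V(D_1)$. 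Both again contradict minimality, leaving case (1), which gives $\rank(B) = k-1$.
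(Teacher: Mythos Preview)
Your argument is correct and follows essentially the same route as the paper: apply Lemma~\ref{lem:block_matrices}, use minimality to exclude the twin/sink (resp.\ twin/source) alternatives, and repeat on the reversed matrix to handle $B$. One small slip: the parenthetical ``reversing both axes of a staircase matrix produces another staircase'' is false as stated (the $1$s move to the top-right), but what you actually need---that the top-right block of $\tilde{M}$, namely $C^T$ with both axes reversed (equivalently the anti-transpose of $C$), is again a staircase---is true, so the application of Lemma~\ref{lem:block_matrices} to $\tilde{M}$ is valid.
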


    \begin{proof}
    First suppose, in order to obtain a contradiction, that $\rank(A) = \rank(M)$. Since $n_2 \ge k+1 \ge \rank(A) + 1$, by Lemma \ref{lem:block_matrices} we can immediately deduce that either there are two adjacent columns of $B$ that have the same entries, or the final column of $B$ contains only zeroes. 

    Suppose there are two adjacent columns of $B$ with the same entries, and let these correspond to the vertices $u$ and $v$. Let $i = \phi(u)$ and note that $\phi(v) = i + 1$. By definition of $\phi$, a vertex $w \in D_2$ is in $N^+(u)$ if and only if either $\phi(w) < i$ and $B_{\phi(w),i} = 1$, or $\phi(w) > i$ and $B_{\phi(w),i} = 0$. Similarly, a vertex $w \in D_2$ is in $N^+(v)$ if and only if either $\phi(w) < i+1$ and $B_{\phi(w),i+1} = 1$, or $\phi(w) > i+1$ and $B_{\phi(w),i+1} = 0$. Since $B_{j,i} = B_{j,i+1}$ for all $j$, we see that $N^+(u)\setminus\{v\} = N^+(v)\setminus\{u\}$. In particular, $u$ and $v$ are twin vertices in $D_2$.
    Let $D_2' = D_2 - u$.
    By Proposition \ref{prop:twin}, $\inv(D_2') = \inv(D_2)$.

    Otherwise, suppose that the final column of $B$ is all zeros. This means that the vertex $u$ with $\phi(u)=n$ is a sink in $D_2$. Let $D_2' = D_2-u$. By Proposition \ref{prop:source/sink},  $\inv(D_2') = \inv(D_2)$.

    In either case, by Observation \ref{obs:inv subgraph}, 
    \[k = \inv(D_2) = \inv(D_2') \le \inv({D_1 \rightarrow D_2'}) \le \inv({D_1 \rightarrow D_2}) = k.\]
    In particular, $\inv(D_2') = \inv({D_1 \rightarrow D_2'})=k$ and $D_2'$ has one fewer vertex than $D_2$, contradicting the minimality of $|V(D_1)| + |V(D_2)|$. Hence, $\rank(A)<\rank(M)$.      
   
   Now suppose $\rank(B) = \rank(M)$. This follows along the same lines as the previous case, the only difference being that we must apply Lemma \ref{lem:block_matrices} to $M$ with rows and columns in reverse. Either there are two adjacent columns of $A$ that have the same entries or the first column of $A$ contains only zeroes, corresponding to $D_1$ containing twin vertices or a source vertex, respectively. The proof then proceeds as before, and thus $\rank(B) < \rank(M)$.  Therefore, $\rank(A) = \rank(B) = k-1$ and $\rank(M) = k$.
   \end{proof}

By Claim~\ref{cl:rank}, $\inv(D_1) = \rank(A) + 1$ and $\inv(D_2) = \rank(B) + 1$. Hence, by Corollary~\ref{cor:tournament_minrank}, $A$ and $B$ (and thus $M$) must have zero entries on the diagonal. Therefore, every matrix of $\M^*(D_1 \rightarrow D_2)$ of minimum rank has rank $k$ and every diagonal entry equal to zero. By Corollary \ref{cor:tournament_minrank}, we conclude that $\inv({D_1 \rightarrow D_2}) = k+1$, which is a contradiction.
\end{proof}

\section{Open problems}\label{sec:probs}

In light of Theorem~\ref{thm:generating_examples}, and the fact that 
all of the examples in~\cite{alon2024invertibility,aubian2022problems} of pairs of oriented graphs $D_1,D_2$ with $\inv(D_1\rightarrow D_2) < \inv(D_1) + \inv(D_2)$ can be obtained by an application of this theorem, we ask whether these are all such examples.

\begin{question}\label{q:other_examples}
    Do there exist tournaments $D_1, D_2$ with $\inv(D_i) = \tmr(D_i)$ for $i = 1,2$ and $\inv(D_1\rightarrow D_2) < \inv(D_1) + \inv(D_2)$?
\end{question}

Note that by \Cref{cor:tournament_minrank}, for any tournament $D$, if $\inv(D)$ is even, then $\inv(D) = \tmr(D)$. Hence, a negative answer to this question would disprove the following pair of similar conjectures (the latter of which is strictly stronger than the former). 
\begin{conjecture}[\protect{\cite[Conjecture 8.9]{alon2024invertibility}}]
    For all $\ell ,r\in \mathbb{N}$ with $\ell \geq 3$ or $r\geq 3$ there exist oriented graphs $D_1$ and $D_2$ with $\inv(D_1) =\ell$ and $\inv(D_2) =r$, but $\inv(D_1\rightarrow D_2)< \inv(D_1) + \inv(D_2)$.
\end{conjecture}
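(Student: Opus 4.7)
The plan is to split the conjecture according to the parities of $\ell$ and $r$. When at least one of $\ell, r$ is odd and at least $3$, the existing machinery (Lemma~\ref{lem:generating_examples} together with the construction of~\cite{aubian2022problems}) already suffices, and the genuine difficulty lies in the even--even regime.

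For the odd regime, suppose without loss of generality that $\ell \geq 3$ is odd. The paper~\cite{aubian2022problems} provides, for each odd $k \geq 3$, a tournament $T_k$ with $\inv(T_k)=k$ and (by Corollary~\ref{cor:tournament_minrank}) $\inv(T_k)=\tmr(T_k)+1$. Take $D_1 = T_\ell$ and let $D_2$ be any oriented graph with $\inv(D_2)=r$ (for instance $r$ vertex-disjoint directed $3$-cycles, whose inversion number is computable inductively from Proposition~\ref{prop:source/sink}). Lemma~\ref{lem:generating_examples} then gives $\inv(D_1 \rightarrow D_2) \leq \ell + r - 1 < \ell + r$, as required. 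The symmetric case (with $r$ odd and at least $3$) reduces to this one by arc reversal: inversion numbers and tournament minimum rank are both invariant under reversing all arcs, and the reverse of $D_1 \rightarrow D_2$ is $\mathrm{rev}(D_2) \rightarrow \mathrm{rev}(D_1)$.

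The remaining case, in which both $\ell$ and $r$ are even (with at least one $\geq 4$), is the main obstacle. By Corollary~\ref{cor:tournament_minrank}, the hypothesis $\inv(D) = \tmr(D)+1$ forces $\inv(D)$ to be odd, so Lemma~\ref{lem:generating_examples} is inapplicable and we are squarely in the regime of Question~\ref{q:other_examples}: we must find tournaments $D_1, D_2$ with $\inv(D_i)=\tmr(D_i)$ and $\inv(D_1 \rightarrow D_2) < \inv(D_1) + \inv(D_2)$. My plan is to work at the matrix level. As in the proof of Lemma~\ref{lem:D_dijoin_D}, a certificate for $\inv(D_1 \rightarrow D_2) \leq s$ amounts to a symmetric $\F_2$-matrix
\[
M = \begin{bmatrix} A & C \\ C^T & B \end{bmatrix} \in \M^*(D_1 \rightarrow D_2)
\]
with $A \in \M^*(D_1)$, $B \in \M^*(D_2)$, $C$ an $n_1 \times n_2$ staircase matrix, and $\rank(M) \leq s$ (or $s-1$ if $M$ has a nonzero diagonal entry). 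The aim is to engineer $D_1, D_2$ so that a suitable $C$ drops $\rank(M)$ strictly below $\tmr(D_1) + \tmr(D_2)$, ideally while $M$ still has a nonzero diagonal entry so that Corollary~\ref{cor:tournament_minrank} extracts a further unit of saving. One concrete avenue is to start from the odd-$k$ tournaments of~\cite{aubian2022problems}, which already realise $\inv = \tmr+1$, and modify them---by a carefully chosen vertex substitution, a single vertex addition designed to lift $\inv$ by one, or an outer dijoin with a small tournament of controlled inversion number---to bump $\inv$ to an even value while retaining a staircase-mediated rank collapse across the block decomposition.

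The hard part will be proving that such a modification preserves, rather than destroys, the delicate alignment between the column space of $A$, the row space of $B$, and the staircase block $C$. The matrix constraints (symmetry, rank over $\F_2$, staircase shape of $C$, the prescribed off-diagonal pattern imposed by $D_1, D_2$, and the parity condition of Corollary~\ref{cor:tournament_minrank}) are highly rigid, and no general mechanism is currently known for making tournament minimum rank strictly subadditive across a dijoin once both factors already satisfy $\inv = \tmr$. I would expect a resolution to require a novel explicit construction tailored to this setting, rather than a random or pigeonhole argument; any positive result here would simultaneously answer Question~\ref{q:other_examples} in the affirmative, which is essentially equivalent to settling the conjecture.
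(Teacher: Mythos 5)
This statement is not a theorem of the paper at all: it is Conjecture 8.9 of \cite{alon2024invertibility}, which the paper restates as an \emph{open} conjecture. Indeed, the surrounding discussion points the other way --- the authors note that a negative answer to their Question~\ref{q:other_examples} would \emph{disprove} this conjecture, so they are explicitly agnostic about (if not mildly sceptical of) its truth. Your proposal does not prove it either. The part you do establish --- the case where some parameter is odd and at least $3$, via the tournaments of \cite{aubian2022problems} satisfying $\inv = \tmr + 1$ and Lemma~\ref{lem:generating_examples} --- is correct but is exactly the content already in the literature and in the paper's Lemma~\ref{lem:generating_examples}. The genuinely open regime you then describe is left as a plan, not an argument: you identify the matrix-level obstruction (forcing $\rank(M) < \tmr(D_1)+\tmr(D_2)$ across the staircase block while both factors satisfy $\inv = \tmr$) but supply no construction, and you yourself concede that ``no general mechanism is currently known.'' That is a genuine and unfilled gap, and it is the entire difficulty of the conjecture.

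Two smaller points. First, your case split is not exhaustive: the cases not covered by the odd argument are not only ``both $\ell$ and $r$ even'' but also, e.g., $\ell$ even with $\ell \ge 4$ and $r \in \{1,2\}$ (here $r=1$ is odd but the construction of \cite{aubian2022problems} only supplies tournaments with $\inv = \tmr+1$ for odd inversion number at least $3$, and Corollary~\ref{cor:tournament_minrank} shows no tournament with even inversion number can satisfy $\inv = \tmr + 1$, so Lemma~\ref{lem:generating_examples} genuinely cannot be applied with the even-side graph playing the role of $D_1$). Second, your closing claim that a positive resolution of the remaining case ``would simultaneously answer Question~\ref{q:other_examples} in the affirmative'' is right in spirit, but it underlines that what you are proposing is to solve an open problem the paper deliberately leaves open, not to reconstruct a proof the paper contains.
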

\begin{conjecture}[\protect{\cite[Conjecture 5.3]{aubian2022problems}}]
    For all $\ell \geq 3$ there exists an oriented graph $D_1$ with $\inv(D_1) =\ell$ such that for all $D_2$ with $\inv(D_2) \ge 1$, we have $\inv(D_1\rightarrow D_2)< \inv(D_1) + \inv(D_2)$.
\end{conjecture}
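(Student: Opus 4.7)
The natural plan is to reduce the conjecture to an instance of Lemma~\ref{lem:generating_examples}. That lemma delivers exactly the desired strict subadditivity $\inv(D_1 \rightarrow D_2) \le \inv(D_1) + \inv(D_2) - 1$ (for every $D_2$ with $\inv(D_2) \ge 1$) provided we can produce, for each $\ell \ge 3$, a tournament $D_1$ with $\inv(D_1) = \ell$ and $\inv(D_1) = \tmr(D_1) + 1$. By Proposition~\ref{prop:digraph_to_tournament} it is no loss to insist on tournament witnesses.

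For odd $\ell \ge 3$ this plan succeeds for free: when $\ell$ is odd, $\tmr(D_1) = \ell - 1$ is even, so Corollary~\ref{cor:tournament_minrank} permits $\inv(D_1) = \tmr(D_1) + 1$, and the explicit family of tournaments constructed in~\cite{aubian2022problems} realises exactly this situation (and in fact already proves the conjecture for odd $\ell$ directly). So the odd case is established without further work.

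The main obstacle is the even case, where Corollary~\ref{cor:tournament_minrank} blocks the above route outright: if $\inv(D_1) = \ell$ is even, then $\tmr(D_1)$ cannot equal $\ell - 1$, forcing $\inv(D_1) = \tmr(D_1)$ and leaving Lemma~\ref{lem:generating_examples} with nothing to say. A proof here must identify a new mechanism for strict subadditivity. The approach I would first attempt is to establish $\tmr(D_1 \rightarrow D_2) < \tmr(D_1) + \tmr(D_2)$ for carefully chosen $D_1$; such a drop can only come from a transitive ordering $T$ of $V(D_1 \rightarrow D_2)$ that interleaves vertices of the two parts and induces rank cancellation between the $V(D_1)$ and $V(D_2)$ blocks of the associated graph matrix. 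The goal would be to engineer a tournament $D_1$ of even inversion number $\ell$ whose minimum-rank representations all share a universal column/row structure that forces such cancellation against \emph{every} $D_2$.

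The hard step will be making this work against an adversarial $D_2$: the tournament $D_1$ has to interact generically enough with every oriented graph $D_2$ with $\inv(D_2) \ge 1$, and there is no obvious reason in the current framework why any single $D_1$ of even inversion number should. A plausible backup is a probabilistic construction — a random tournament of order $\Theta(2^\ell)$ conditioned on $\inv = \ell$ — but controlling $\tmr$ under such conditioning looks delicate, and even the unconditional concentration behaviour of $\tmr$ on random tournaments seems to go beyond the tools developed here. I expect the even case to require a genuinely new structural idea beyond Lemma~\ref{lem:generating_examples}, and would view a positive answer to Question~\ref{q:other_examples} as the natural stepping stone, since it isolates precisely the missing piece.
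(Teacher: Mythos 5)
This statement is not proved in the paper: it is quoted as an open conjecture (Conjecture 5.3 of the cited work) in the ``Open problems'' section, and the paper takes no position on its truth beyond observing that a negative answer to Question~\ref{q:other_examples} would \emph{disprove} it. So there is no ``paper's own proof'' to compare against, and your proposal is not a proof either --- by your own admission it establishes only the odd case and leaves the even case entirely open. That is the genuine gap: for even $\ell$ you have no construction, no candidate mechanism that is shown to work, and no argument; the probabilistic and ``engineered cancellation'' ideas you sketch are speculative and are not carried out.

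Your analysis of \emph{why} the even case is hard is, however, exactly right and matches the paper's own discussion following Question~\ref{q:other_examples}: Corollary~\ref{cor:tournament_minrank} forces $\inv(D_1) = \tmr(D_1)$ whenever $\inv(D_1)$ is even, so Lemma~\ref{lem:generating_examples} is structurally unavailable, and any even-$\ell$ witness would have to exhibit strict subadditivity through a drop in $\tmr$ of the dijoin itself --- precisely the phenomenon whose existence is Question~\ref{q:tmr_adds}, and which the paper conjectures does not occur for dijoins with $\overrightarrow{C_3}$. In short: the odd case you cite is correct and is already in the literature, but the statement as a whole remains a conjecture, and nothing in your proposal (or in this paper) closes it. You should present this as a partial result plus an obstruction analysis, not as a proof.
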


One approach to answering Question~\ref{q:other_examples} would be to bound the tournament minimum rank of the dijoin of two tournaments.
\begin{question} \label{q:tmr_adds}
    Do there exist tournaments $D_1, D_2$ with $\tmr(D_1 \rightarrow D_2) < \tmr(D_1) + \tmr(D_2) $?
\end{question}
A negative answer to this question would be a very strong result that would also answer  Question~\ref{q:other_examples} in the negative, and therefore resolve the two conjectures above. Moreover, we could immediately conclude that \[
\inv(D_1) + \inv(D_2) - 2 \le \inv(D_1 \rightarrow D_2) \le  \inv(D_1) + \inv(D_2).\]
for all oriented graphs $D_1,D_2$, by a simple application of Proposition~\ref{prop:digraph_to_tournament} and Corollary~\ref{cor:tournament_minrank}.

One place to start would be to answer Question~\ref{q:tmr_adds} in the special case when one of the tournaments is $\overrightarrow{C_3}$, the directed cycle on three vertices.

\begin{conjecture}
    For all tournaments $D$, we have $\tmr(D \rightarrow \overrightarrow{C_3}) = \tmr(D) + 1 =  \tmr( \overrightarrow{C_3} \rightarrow D)$.
\end{conjecture}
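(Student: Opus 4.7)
The plan is to prove $\tmr(D\to\overrightarrow{C_3})=\tmr(D)+1$ (the symmetric statement for $\overrightarrow{C_3}\to D$ follows analogously) by establishing the two inequalities separately. The upper bound $\tmr(D\to\overrightarrow{C_3})\le\tmr(D)+1$ is routine: choose a transitive tournament $T$ that places all of $V(D)$ before $V(\overrightarrow{C_3})$. Every dijoin edge then agrees with $T$, so the conflict graph $G_{D\to\overrightarrow{C_3},T}$ is the disjoint union of $G_{D,T_D}$ and $G_{\overrightarrow{C_3},T_{\overrightarrow{C_3}}}$, and the block-diagonal combination of optimal matrices in $\M(G_{D,T_D})$ and $\M(G_{\overrightarrow{C_3},T_{\overrightarrow{C_3}}})$ realises rank $\tmr(D)+\tmr(\overrightarrow{C_3})=\tmr(D)+1$.

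For the matching lower bound I would adapt the strategy of \Cref{lem:D_dijoin_D}. Assume for contradiction $\tmr(D\to\overrightarrow{C_3})=\tmr(D)=k$ and choose a counterexample with $|V(D)|$ minimal. Pick $T$ and a matrix $M\in\M(G_{D\to\overrightarrow{C_3},T})$ with $\rank(M)=k$; after the $\phi$-reordering write $M=\begin{pmatrix}A & C\\ C^T & B\end{pmatrix}$ with $A\in\M^*(D)$, $B\in\M^*(\overrightarrow{C_3})$, and $C$ an $n\times 3$ staircase. Exactly as in \Cref{cl:rank}, $\tmr(D)\le\rank(A)\le\rank(M)=k$ forces $\rank(A)=k$, so we can write $M=\begin{pmatrix}I\\Y^T\end{pmatrix}A\begin{pmatrix}I & Y\end{pmatrix}$ for some $Y\in\F_2^{n\times 3}$; in particular $C=AY$ and $B=Y^TAY$.

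A direct check on the six transitive tournaments on three vertices shows that the off-diagonal of any matrix in $\M^*(\overrightarrow{C_3})$ must be either $(B_{12},B_{13},B_{23})=(0,1,0)$ (when the restriction of $T$ to $V(\overrightarrow{C_3})$ orders the cycle forward-cyclically relative to $\overrightarrow{C_3}$) or $(1,0,1)$ (when backward-cyclically); in particular $B_{12}=B_{23}$ and $B_{12}\ne B_{13}$ hold in both cases. I would then run a case analysis on the distinct columns of the staircase $C$. If any two columns of $C$ coincide, a short calculation using $B_{ij}=y_i^T c_j$ gives $B_{13}=B_{23}$ (when $c_1=c_2$) or $B_{12}=B_{13}$ (when $c_2=c_3$), both incompatible with the allowed patterns. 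If the final column of $C$ is zero, then the last column of $B$ vanishes, and again neither pattern is achieved. The remaining case is when $C$ has three distinct non-zero columns, so that $\rank(C)=3$ and $k\ge 3$.

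In this remaining case I would apply \Cref{lem:block_matrices} to $M$ with the roles of $A$ and $B$ reversed (after reversing the order of all rows and columns), which is justified because $n\ge\tmr(D)+1\ge k+1>3\ge\rank(B)$. The lemma yields either (i) $\rank(B)\le k-1$, (ii) two adjacent columns of $A$ coincide (giving twin vertices in $D$), or (iii) the first column of $A$ is zero (giving a source of $D$). In cases (ii) and (iii) one removes a vertex of $D$ without changing $\tmr(D)$ or $\tmr(D\to\overrightarrow{C_3})$, using $\tmr$-analogues of \Cref{prop:source/sink} and \Cref{prop:twin} that follow by restricting and extending minimum-rank matrices; this contradicts the minimality of $|V(D)|$. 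To rule out (i) introduce the symmetric bilinear form $\beta(c,c')=y^T c'$ on $\mathrm{col}(A)$, where $y$ is any preimage of $c$ under $A$; this is well-defined and non-degenerate because $\mathrm{col}(A)=(\ker A)^\perp$ for symmetric $A$ over $\F_2$, and $B$ is precisely its Gram matrix on $c_1,c_2,c_3$. When $k=3$ the $c_j$ form a basis of $\mathrm{col}(A)$, so $\rank(B)=3$, contradicting (i). The main obstacle is extending this to $k\ge 4$, where $\mathrm{span}(c_1,c_2,c_3)$ is a proper subspace of $\mathrm{col}(A)$ on which $\beta$ may be degenerate and (i) becomes vacuous; handling this will likely require a finer argument leveraging the nested-staircase structure of the $c_j$ together with the specific $(0,1,0)/(1,0,1)$ Gram pattern, or an iterative reduction of $|V(D)|$ that peels off the $V(D)$ vertices sandwiched between consecutive $V(\overrightarrow{C_3})$ vertices in $T$.
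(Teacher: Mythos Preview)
The statement you are attempting is listed in the paper as an open \emph{conjecture} (Section~\ref{sec:probs}); the paper offers no proof, so there is nothing to compare your argument against. What remains is to assess whether your proposal actually settles the conjecture.

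Your upper bound and the reduction to $\rank(A)=k$, $C=AY$, $B=Y^TAY$ are correct, as is the case analysis when two columns of $C$ coincide or $c_3=0$ (using $B_{ij}=c_i^Ty_j$ and the fact that the only admissible off-diagonal patterns for $B\in\M^*(\overrightarrow{C_3})$ are $(0,1,0)$ and $(1,0,1)$). The treatment of $k=3$ via the non-degenerate bilinear form $\beta$ on $\mathrm{col}(A)$ is also fine: when $c_1,c_2,c_3$ form a basis of $\mathrm{col}(A)$ the Gram matrix $B$ has full rank $3$, ruling out option (i).

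The genuine gap is exactly the one you flag: for $k\ge 4$ the reversed application of \Cref{lem:block_matrices} is vacuous. Option (i) there asserts only $\rank(B)\le k-1$, which is automatic since $B$ is $3\times 3$; so the lemma does \emph{not} force the twin/source alternatives (ii) or (iii), and your minimality argument never gets off the ground. The suggestions you sketch (exploiting the nested structure of the $c_j$ or peeling off the $V(D)$ vertices interleaved with $V(\overrightarrow{C_3})$ in $T$) are plausible directions, but as written the proof is incomplete for all $k\ge 4$, and this is the heart of the difficulty---indeed, it is precisely why the authors pose this as a conjecture rather than a theorem.
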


We can generalise the idea of dijoins to sequences of graphs. Given a finite sequence $D_1,...,D_k$ of oriented graphs, the $k$\emph{-join} of $D_1,...,D_k$, denoted by $[D_1,..., D_k]$, is the oriented graph constructed from vertex-disjoint copies of $D_1,...,D_k$ by adding all edges $uv$ where $u\in V(D_i)$ and $v\in V(D_j)$ for $i<j$. For ease of notation, we write $[D]_k$ for the $k$-join of $k$ copies of the same oriented graph $D$. 

Pouzet, Kaddour and Thatte~\cite{pouzet2023b} proved that $\inv\left(\left[\overrightarrow{C_3}\right]_k\right) = k$ for all $k$, where $\overrightarrow{C_3}$ is the directed cycle on three vertices. Further to this, Alon, Powierski, Savery, Scott and Wilmer~\cite{alon2024invertibility} proved that if $D_1,D_2,\ldots,D_k$ are oriented graphs with $\inv(D_i) \le 2$ for all $i$ and  $\inv(D_i) = 2$ for at most one $i$, then \[
\inv([D_1,D_2, \ldots , D_k]) = \sum_{i=1}^k \inv(D_i).
\]

They conjecture that the condition that $\inv(D_i) = 2$ for at most one $i$ is unnecessary.
\begin{conjecture}[\protect{\cite[Conjecture 8.8]{alon2024invertibility}}]
 Let $k \in \mathbb{N}$,  and  let $D_1,\ldots,D_k$ be  oriented  graphs  satisfying $\inv(D_i)\leq 2$ for all $i$.  Then $\inv([D_1,\ldots,D_k]) =\sum_{i=1}^k \inv(D_i)$.
\end{conjecture}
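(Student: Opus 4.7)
The plan is to proceed by strong induction on the total vertex count $N := \sum_{i=1}^{k} |V(D_i)|$, using the matrix-rank apparatus of \Cref{sec:subgraph_complement}. Extend $\tmr$ to oriented graphs by $\tmr(D) := \min\{\tmr(D^*) : D^* \supseteq D \text{ tournament}\}$ (so that the analog of \Cref{cor:tournament_minrank} carries over, and the parity constraint forces $\inv(D_i) = \tmr(D_i)$ whenever $\inv(D_i) \le 2$). The conjecture is then equivalent to $\inv([D_1,\ldots,D_k]) = \sum_i \tmr(D_i)$, and only the lower bound requires work.

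Suppose $(D_1,\ldots,D_k)$ is a counterexample with $N$ minimal. Twin vertices in any $D_i$ remain twin in the $k$-join, and a source of $D_1$ (respectively, a sink of $D_k$) is a source (respectively, a sink) of the $k$-join, so \Cref{prop:source/sink} and \Cref{prop:twin} applied to the $k$-join let us delete such a vertex and reduce $N$ while preserving $\inv([D_1,\ldots,D_k])$. Hence in a minimum counterexample, $D_1$ has no source, $D_k$ has no sink, and no $D_i$ has twin vertices. Choose a minimum-rank matrix $M \in \M^*([D_1,\ldots,D_k])$ (after replacing each $D_i$ by a suitable tournament extension) and order its rows and columns block-by-block, each $V(D_i)$ internally ordered by the restriction of the transitive tournament $T$ underlying $M$. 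Then $M$ has diagonal blocks $A_i \in \M^*(D_i)$ and off-diagonal blocks $C_{ij}$ for $i<j$ that are staircase matrices.

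The goal is $\rank(M) \ge \sum_i \tmr(D_i)$, which contradicts $\rank(M) \le \inv([D_1,\ldots,D_k]) < \sum_i \inv(D_i) = \sum_i \tmr(D_i)$. Apply \Cref{lem:block_matrices} with the $2$-partition $(V(D_1),\,V(D_2)\cup\cdots\cup V(D_k))$, re-sorting the ``rest'' columns by the global $T$-position so that the resulting off-diagonal block is a true staircase. The ``twin pair'' and ``zero final column'' conclusions translate---mirroring the analysis in the proof of \Cref{lem:D_dijoin_D}---into reducible structure inside the $k$-join: twin vertices inside some $D_i$, a sink of $D_k$, or a sink-of-$D_i$/source-of-$D_{i+1}$ pair across consecutive blocks, each yielding a smaller counterexample and hence a contradiction. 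Iterating this for each $D_i$ in turn, combined with the inductive hypothesis applied to the submatrix $M_{-1} \in \M^*([D_2,\ldots,D_k])$, is intended to propagate the rank increase through all $k$ blocks.

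The main obstacle is the rank bookkeeping: a naive iteration of \Cref{lem:block_matrices} only yields something like $\rank(M) \ge \max_i \tmr(D_i) + (k-1)$, which is strictly weaker than $\sum_i \tmr(D_i)$ once two or more of the $\tmr(D_i)$ equal $2$. Closing this gap is essentially a restricted case of the $\tmr$-superadditivity question (\Cref{q:tmr_adds}). A plausible route is to classify the rank-at-most-$2$ symmetric matrices over $\F_2$ (namely $0$, $vv^{T}$, $uu^{T}+vv^{T}$, and the hyperbolic $uv^{T}+vu^{T}$) and to show that a staircase $C$ compatible with two such diagonal blocks $A, B$---where neither $A$ nor $B$ admits a source, sink, or twin pair---must contribute the full $\tmr(A) + \tmr(B)$ to $\rank(M)$. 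Extending this ``staircase sandwich'' bound from two to $k$ blocks is where genuinely new ideas will be needed.
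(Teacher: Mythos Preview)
The statement you are attempting to prove is \emph{not} a theorem of the paper: it is cited as \cite[Conjecture 8.8]{alon2024invertibility} and appears in \Cref{sec:probs} (Open problems). The paper offers no proof; it merely remarks that a negative answer to \Cref{q:tmr_adds} (superadditivity of $\tmr$ under dijoin) would immediately imply it. So there is no ``paper's own proof'' to compare against, and the relevant question is simply whether your proposal succeeds on its own.

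It does not, and to your credit you say so yourself. The minimality reduction (no source in $D_1$, no sink in $D_k$, no twins in any $D_i$) is sound, and the block decomposition of $M$ with staircase off-diagonal blocks is the right setup. But the payload step --- deducing $\rank(M)\ge\sum_i\tmr(D_i)$ --- is exactly where the argument stalls. A single application of \Cref{lem:block_matrices} with the partition $(V(D_1),\,V(D_2)\cup\cdots\cup V(D_k))$ either produces reducible structure (contradicting minimality) or yields $\rank(M)\ge\rank(A_1)+1$; iterating gives at best $\rank(M)\ge\max_i\tmr(D_i)+(k-1)$, which falls short of $\sum_i\tmr(D_i)$ as soon as at least two of the $\tmr(D_i)$ equal $2$. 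Your proposed fix --- a case analysis of rank-$\le 2$ symmetric matrices over $\F_2$ together with a ``staircase sandwich'' bound for two blocks, then somehow extended to $k$ blocks --- is precisely a restricted instance of \Cref{q:tmr_adds}, which the paper explicitly leaves open. In other words, your proposal rediscovers the reduction the paper already notes, rather than resolving it.

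One further technical point: when the two adjacent equal columns of the ``rest'' block furnished by \Cref{lem:block_matrices} correspond to vertices $u\in D_i$ and $v\in D_j$ with $i<j$, your claim that this yields ``a sink-of-$D_i$/source-of-$D_{i+1}$ pair'' needs a genuine argument. Equality of those two $B$-columns (and, via the proof of \Cref{lem:block_matrices}, of the corresponding $C$-columns) only says that $u$ and $v$ are twins in the $k$-join, which for $i<j$ is a strong constraint but does not obviously force $j=i+1$, nor that $u$ is a sink in $D_i$ and $v$ a source in $D_j$; you would need to unpack what twinhood across blocks says about the edges from $u$ into $D_{i+1},\ldots,D_j$ and from $D_i,\ldots,D_{j-1}$ into $v$. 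Even if this can be cleaned up, it is the rank-additivity gap above that is the real obstruction.
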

We remark that $\inv(D) = \tmr(D)$ for every tournament with $\inv(D) \leq 2$, and so a negative answer to Question~\ref{q:tmr_adds} would immediately lead to a proof of this conjecture. In addition, combined with Corollary~\ref{cor:tournament_minrank}, it would give an affirmative answer to the following question, yielding a more general result.
\begin{question}
 Let $k \in \mathbb{N}$, and let $D_1,\ldots,D_k$ be oriented graphs such that, for every $i$, either $\inv(D_i) =1$ or $\inv(D_i)$ is even.  Is  $\inv([D_1,\ldots,D_k]) =\sum_{i=1}^k \inv(D_i)$?
\end{question}

\paragraph{Acknowledgements} We would like to thank the anonymous referees for their helpful comments.

\bibliographystyle{plainurl}
\bibliography{bib}
\end{document}